\newtheorem{theorem}{Theorem}
\newtheorem{lemma}[theorem]{Lemma}
\newtheorem{proposition}[theorem]{Proposition}
\newtheorem{corollary}[theorem]{Corollary}
\theoremstyle{plain}
\begin{document}

\title[There and Back Again]{There and Back Again: Revisiting the Failure of Concatenation in Mittag-Leffler Functions}

%%%%%%%%%%%%%%%%%%%%%%%%%%%

\author[P. M. Carvalho-Neto]{Paulo M. Carvalho-Neto}
\address[Paulo M. Carvalho-Neto]{Department of Mathematics, Federal University of Santa Catarina, Florian\'{o}polis - SC, Brazil\vspace*{0.3cm}}
\email[]{paulo.carvalho@ufsc.br}

\author[C. E. T. Ledesma]{Cesar E. T. Ledesma}
\address[Cesar E. T. Ledesma]{Department of Mathematics, Institute for Research in Mathematics, Faculty of Physical and Mathematical Sciences, National University of Trujillo - La Libertad, Peru\vspace*{0.3cm}}
\email[]{ctl\_576@yahoo.es}

%%%%%%%%%%%%%%%%%%%%%%%%%%%

\subjclass[2020]{26A33, 34A08, 33E12, 47D03}

%%%%%%%%%%%%%%%%%%%%%%%%%%%

\keywords{Mittag-Leffler function, semigroup property, fractional differential equations, Caputo derivative, nonlocal dynamics, functional equations.}

%%%%%%%%%%%%%%%%%%%%%%%%%%%

\begin{abstract}
The function $t \mapsto E_{\alpha}(\lambda t^\alpha)$ is widely regarded as the fractional analogue of the exponential function, yet its algebraic properties remain poorly understood. In particular, standard references lack a rigorous proof of the failure of the semigroup property. In this work, we fill this gap by providing an analytical proof that the semigroup property holds if and only if either $\alpha = 1$ or $\lambda = 0$. Our result establishes a precise criterion for when Mittag-Leffler dynamics are compatible with semigroup evolution, thereby emphasizing the intrinsic nonlocality of fractional-order differential equations.
\end{abstract}

\maketitle

\section{Origins and Structural Properties}

Introduced in the early 20th century, the Mittag--Leffler function has grown from a theoretical construct in the classification of entire functions into a central object in modern fractional calculus. For $\alpha > 0$, it is defined by the series
\[
E_{\alpha}(z) = \sum_{n=0}^\infty \frac{z^n}{\Gamma(\alpha n + 1)}, \quad z \in \mathbb{C},
\]
and generalizes the exponential function, which is recovered when $\alpha = 1$.

Originally studied by G.~M.~Mittag-Leffler in 1903 in the context of entire functions with controlled growth~\cite{ML01,ML02}, the function $E_\alpha$ gained further mathematical depth through the work of Wiman~\cite{Wi01}, who investigated the asymptotic distribution of its zeros in the complex plane. His results showed that for $\alpha > 2$ the zeros lie on the negative real axis, while for $0 < \alpha < 2$ they spread along logarithmic spirals, a pattern that reflects the complex nature of fractional dynamics.

In recent decades, the Mittag--Leffler function has emerged as the canonical solution of fractional differential equations. More specifically, the solution to the initial value problem
\[
\left\{
\begin{aligned}
    cD_t^\alpha u(t) &= \lambda u(t), \\
    u(0) &= u_0,
\end{aligned}
\right.
\]
with $0 < \alpha < 1$ and $cD_t^\alpha$ denoting the Caputo fractional derivative, is given by $u(t) = E_\alpha(\lambda t^\alpha) u_0$. This expression is often viewed as the natural fractional analogue of the exponential evolution $e^{\lambda t} u_0$ from classical analysis; see \cite[Section 1.8 and Chapter 2]{KiSrTr01} or \cite[Section 1.2 and Chapter 3]{Po01} as two classical references on this subject.

However, despite this structural analogy, the Mittag--Leffler-based solution does not inherit one of the most fundamental features of the exponential: the semigroup (or concatenation) property. In classical settings, the exponential satisfies $e^{\lambda(t+s)} = e^{\lambda t} e^{\lambda s}$, for every pair $t,s\in\mathbb{R}$, yielding a semigroup structure essential to the theory of evolution equations. But for $\alpha \ne 1$ or $\lambda\not=0$, the function $t \mapsto E_\alpha(\lambda t^\alpha)$ fails to satisfy any such property, and no rigorous proof of this fact appears to be available in the literature.

To the best of our knowledge, the first critical examination of the identity
$$E_\alpha\big(\lambda(t + s)^\alpha\big) = E_\alpha\big(\lambda t^\alpha\big)\, E_\alpha\big(\lambda s^\alpha\big), \quad t, s \geq 0,$$
was carried out by Peng and Liu in \cite{PeLi01}. They pointed out that, despite its frequent use in the literature without justification, this multiplicative property is generally invalid, holding only in the particular cases $\alpha = 1$ or $\lambda = 0$. Although the authors mention that this limitation can be heuristically verified via Laplace transform techniques, a complete and rigorous proof was not provided. Their contribution also includes the derivation of an alternative identity satisfied by $E_\alpha(at^\alpha)$, which recovers the multiplicative form above in the classical limit $\alpha \to 1$.

In the present work, we close this gap by rigorously establishing that the semigroup property holds for $E_{\alpha}(\lambda t^\alpha)$ if and only if $\alpha = 1$ or $\lambda = 0$, thereby recovering the classical exponential case. Our result provides a sharp analytical criterion and highlights the structural obstruction to semigroup behavior in fractional-order dynamics, reinforcing the inherently nonlocal and memory-dependent character of such systems.

For a broader overview of Mittag--Leffler functions, including their analytic properties and a wide range of applications, we refer the reader to the comprehensive exposition in~\cite[Chapter~3]{GoKiMaRo1}.

\section{Absence of Semigroup Structure}

We begin by presenting a classical identity involving the Mittag--Leffler functions, which can be found in \cite[Section 5]{HaMaSa1}, namely:

\begin{proposition}\label{202508041600}
Let $\alpha > 0$. Then
$$
\dfrac{d}{dt}\Big(E_{\alpha}(\lambda t^\alpha)\Big) =  t^{\alpha-1} \lambda E_{\alpha,\alpha}(\lambda t^\alpha),
$$
for every $t > 0$. Here, $E_{\alpha,\alpha}(z)$ denotes one of the two-parameter Mittag--Leffler functions, which are also analytic and share similar properties, such as $E_{\alpha,\alpha}(0) = 1$ and $E_{\alpha,\alpha}(t) > 0$ for every $t \geq 0$. For more details on this function, see \cite[Chapter 4]{GoKiMaRo1}.
\end{proposition}

We also recall a classical result from \cite[Theorem 1 in Section 2.1.1]{Ac1}, which characterizes the solutions of a well-known functional equation.
\begin{theorem}\label{202508041539}
Suppose that $f : \mathbb{R} \to \mathbb{R}$ is a continuous function satisfying
$$
f(t+s) = f(t)f(s), \quad \text{for all } t, s \in \mathbb{R},
$$
and $f(0) = 1$. Then there exists a constant $\omega \in \mathbb{R}$ such that
$$
f(\tau) = e^{\omega \tau}, \quad \text{for all } \tau \in \mathbb{R}.
$$
\end{theorem}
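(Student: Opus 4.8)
The plan is to reduce the multiplicative equation to the additive Cauchy functional equation by taking logarithms, which first requires establishing that $f$ never vanishes. Writing $f(t) = f(t/2)^2$ shows immediately that $f \geq 0$ everywhere. To upgrade this to strict positivity, I would argue by contradiction: if $f(t_0) = 0$ for some $t_0$, then for every $\tau \in \mathbb{R}$ the factorization $f(\tau) = f(\tau - t_0)\, f(t_0) = 0$ would force $f \equiv 0$, contradicting the normalization $f(0) = 1$. Hence $f(t) > 0$ for all $t \in \mathbb{R}$, and the composition $g := \ln \circ f$ is well defined and continuous.

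Next I would transfer the hypotheses to $g$. Applying $\ln$ to $f(t+s) = f(t)f(s)$ yields the additive relation $g(t+s) = g(t) + g(s)$ for all $t,s \in \mathbb{R}$, together with $g(0) = \ln f(0) = 0$. Setting $\omega := g(1)$, a routine induction gives $g(n) = n\omega$ for every nonnegative integer $n$; the relation $g(t) + g(-t) = g(0) = 0$ extends this to all integers, and evaluating $g$ on $p/q$ via $q\, g(p/q) = g(p) = p\omega$ gives $g(r) = \omega r$ for every rational $r$. At this stage the identity $f(r) = e^{\omega r}$ is fixed on the rationals.

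The final and most delicate step is the passage from rationals to reals, and this is precisely where the continuity hypothesis becomes essential: without it, Hamel-basis constructions produce pathological additive solutions, so any correct proof must invoke continuity here. Concretely, I would fix an arbitrary $t \in \mathbb{R}$, choose a sequence of rationals $r_n \to t$, and use the continuity of $g$ to conclude $g(t) = \lim_{n} g(r_n) = \lim_{n} \omega r_n = \omega t$. Exponentiating recovers $f(\tau) = e^{\omega \tau}$ for all $\tau \in \mathbb{R}$, completing the argument. I expect the positivity step and the rational-scaling induction to be entirely routine, with the density-plus-continuity extension being the conceptual crux of the proof.
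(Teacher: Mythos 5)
Your proof is complete and correct: the positivity argument, the logarithmic reduction to Cauchy's additive equation, the determination of $g$ on the rationals, and the density-plus-continuity extension are all carried out properly. The paper itself does not prove this statement but cites it as Theorem 1 in Section 2.1.1 of Acz\'el's book, and the argument given there is essentially the same classical one you reproduce, so there is nothing to add.
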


The remainder of this section consists of one auxiliary lemma, our main result, and a clearer corollary that better captures the essence of the result we aim to establish.
\begin{lemma}\label{202508041515}
Let $f : [0, \infty) \to \mathbb{R}$ be a continuous function such that $f(\tau) > 0$ for every $\tau \in [0, \infty)$. For each $n \in \mathbb{N}$, define the function $\psi_n : [-n, \infty) \to \mathbb{R}$ by
$$
\psi_n(\tau) = \frac{f(\tau + n)}{f(n)}.
$$
If $f(t + s) = f(t)f(s)$ for all $t, s \in [0, \infty)$, then $\psi_n$ is continuous on $[-n, \infty)$ and coincides with $f$ on $[0, \infty)$. Moreover, if $n > m$, then $\psi_n(\tau) = \psi_m(\tau)$ for all $\tau \in [-m, \infty)$; in particular, $\psi_n$ is a continuous extension of $\psi_m$ on $[-n, \infty)$.
\end{lemma}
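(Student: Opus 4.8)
The plan is to verify each assertion directly from the functional equation together with the positivity of $f$, with the main care going into tracking which arguments of $f$ stay nonnegative so that the hypothesis $f(t+s)=f(t)f(s)$ genuinely applies. First I would establish continuity of $\psi_n$: for $\tau \in [-n,\infty)$ the argument $\tau+n$ is nonnegative, so $f(\tau+n)$ is well defined; since $\tau \mapsto \tau+n$ is continuous and $f$ is continuous on $[0,\infty)$, the composition $\tau \mapsto f(\tau+n)$ is continuous on $[-n,\infty)$, and dividing by the fixed constant $f(n)>0$ preserves this continuity.

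To show that $\psi_n$ coincides with $f$ on $[0,\infty)$, I would take $\tau \geq 0$ and apply the functional equation to the pair $(\tau,n)$, both nonnegative, obtaining $f(\tau+n)=f(\tau)f(n)$; dividing by $f(n)>0$ then gives $\psi_n(\tau)=f(\tau)$ immediately.

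For the compatibility statement, fix integers $n>m$ and a point $\tau \in [-m,\infty)$. Writing $\tau+n=(\tau+m)+(n-m)$, with $\tau+m\geq 0$ and $n-m>0$, the functional equation yields $f(\tau+n)=f(\tau+m)\,f(n-m)$; applying the same identity to the pair $(m,n-m)$ gives $f(n)=f(m)\,f(n-m)$. Substituting both expressions into the definition of $\psi_n$, the common factor $f(n-m)>0$ cancels and leaves $\psi_n(\tau)=f(\tau+m)/f(m)=\psi_m(\tau)$. Since the two functions agree on $[-m,\infty)$ while $\psi_n$ remains defined and continuous on the larger interval $[-n,\infty)$, it follows that $\psi_n$ is a continuous extension of $\psi_m$, as claimed.

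The computation is elementary, and I do not expect a substantive obstacle. The only genuine point of vigilance is the domain bookkeeping: I must ensure that every invocation of the functional equation involves two nonnegative arguments, and that every division is by a strictly positive value of $f$. Handling the endpoints $\tau=-n$ and $\tau=-m$ carefully is the one place where a careless argument could go astray.
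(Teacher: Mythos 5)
Your proposal is correct and follows essentially the same route as the paper: continuity via composition with the shift and division by the positive constant $f(n)$, the identity $f(\tau+n)=f(\tau)f(n)$ for the coincidence with $f$, and the decomposition $\tau+n=(\tau+m)+(n-m)$ together with $f(n)=f(m)f(n-m)$ for the compatibility of $\psi_n$ with $\psi_m$. Your explicit attention to the nonnegativity of the arguments in each use of the functional equation is a point the paper passes over silently, but the argument is the same.
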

\begin{proof}
Let $n \in \mathbb{N}$. The continuity of $\psi_n$ follows directly from the continuity of $f$. Moreover, for every $\tau \in [0, \infty)$, we have
$$\psi_n(\tau) = \frac{f(\tau + n)}{f(n)} = \frac{f(\tau)f(n)}{f(n)} = f(\tau),$$
therefore $\psi_n$ and $f$ coincide on $[0, \infty)$.

Now, suppose $n > m$. For any $\tau \in [-m, \infty)$, we can write
\begin{multline*}\psi_n(\tau)=\frac{f(\tau + n)}{f(n)}=\frac{f(\tau + m + n - m)f(m)}{f(n)f(m)}\\=\frac{f(\tau + m)f(n - m)f(m)}{f(n)f(m)}=\frac{f(\tau + m)}{f(m)}=\psi_m(\tau),\end{multline*}
as desired.
\end{proof}

\begin{theorem}\label{202508041619} For any $\alpha \in (0,1)$ and $\lambda \in \mathbb{R}$, consider the function $f: [0,\infty) \rightarrow \mathbb{R}$ defined by
$
f(t) = E_{\alpha}(\lambda t^{\alpha}).
$
Then $f(t + s) = f(t)f(s)$ for all $t, s \in [0,\infty)$ if and only if $\lambda = 0$.

\end{theorem}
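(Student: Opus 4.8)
The plan is to prove the nontrivial implication by assembling the three preliminary results into a reduction to the classical exponential, and then exploiting the singular behaviour of the derivative at the origin. The reverse implication is immediate: if $\lambda = 0$ then $f \equiv E_\alpha(0) = 1$, which trivially satisfies $f(t+s)=f(t)f(s)$.

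Assume now that $f(t+s) = f(t)f(s)$ for all $t,s \in [0,\infty)$. First I would record that $f$ is continuous on $[0,\infty)$, since $E_\alpha$ is entire and $t \mapsto t^\alpha$ is continuous, and that $f(0) = E_\alpha(0) = 1$. Positivity of $f$ then follows from the functional equation together with continuity: for any $\tau \geq 0$ one has $f(\tau) = f(\tau/2)^2 \geq 0$, and if $f(\tau_0) = 0$ for some $\tau_0 > 0$ then $f(\tau_0/2^k) = 0$ for every $k \in \mathbb{N}$, whence $f(0) = \lim_{k} f(\tau_0/2^k) = 0$, contradicting $f(0) = 1$. Thus $f(\tau) > 0$ for every $\tau \in [0,\infty)$, so the hypotheses of Lemma \ref{202508041515} are satisfied.

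Next I would use Lemma \ref{202508041515} to pass from $[0,\infty)$ to all of $\mathbb{R}$. The lemma produces a consistent family $\{\psi_n\}$, and since $\bigcup_{n} [-n,\infty) = \mathbb{R}$ these glue to a single continuous function $F : \mathbb{R} \to \mathbb{R}$ extending $f$, with $F(0) = 1$. I would then verify that $F$ inherits the multiplicative property on all of $\mathbb{R}$: given $t,s \in \mathbb{R}$, choose $n$ large enough that $t+n \geq 0$ and $s+n \geq 0$, and compute, using $F(t) = f(t+n)/f(n)$, $F(s) = f(s+n)/f(n)$ and $F(t+s) = f(t+s+2n)/f(2n)$ together with $f(t+s+2n) = f(t+n)f(s+n)$ and $f(2n) = f(n)^2$, that $F(t+s) = F(t)F(s)$. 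Theorem \ref{202508041539} then yields a constant $\omega \in \mathbb{R}$ with $F(\tau) = e^{\omega\tau}$; restricting to $[0,\infty)$ gives
\[
E_\alpha(\lambda t^\alpha) = e^{\omega t}, \qquad t \geq 0.
\]

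Finally, I would differentiate this identity and let $t \to 0^+$, which is the decisive step. By Proposition \ref{202508041600} the left-hand derivative equals $t^{\alpha-1}\lambda E_{\alpha,\alpha}(\lambda t^\alpha)$, while the right-hand derivative is $\omega e^{\omega t}$, so
\[
t^{\alpha-1}\lambda E_{\alpha,\alpha}(\lambda t^\alpha) = \omega e^{\omega t}, \qquad t > 0.
\]
As $t \to 0^+$ the right-hand side converges to the finite value $\omega$, whereas on the left $E_{\alpha,\alpha}(\lambda t^\alpha) \to E_{\alpha,\alpha}(0) > 0$ and $t^{\alpha-1} \to +\infty$ because $\alpha - 1 < 0$. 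Hence if $\lambda \neq 0$ the left-hand side diverges to $\pm\infty$, contradicting the finiteness of the limit on the right; therefore $\lambda = 0$. I expect the main obstacle to be exactly this last limiting argument: everything preceding it is structural bookkeeping enabled by the provided lemmas, but it is precisely the non-integer exponent $\alpha \in (0,1)$, producing the blow-up of $t^{\alpha-1}$ at the origin, that is incompatible with the smooth exponential and forces $\lambda = 0$.
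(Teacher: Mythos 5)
Your proposal is correct and follows essentially the same route as the paper: extend $f$ to $\mathbb{R}$ via Lemma \ref{202508041515}, apply Theorem \ref{202508041539} to identify $F$ with an exponential, and then use Proposition \ref{202508041600} together with the blow-up of $t^{\alpha-1}$ as $t\to 0^+$ to force $\lambda=0$ (the paper phrases this last step by solving for $\lambda$ and letting $t\to 0^+$, which is the same estimate). Your explicit verification that $f>0$ on $[0,\infty)$ is a welcome addition, since the paper invokes the lemma without checking that hypothesis.
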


\begin{proof} Let $\alpha \in (0,1)$ be given. If $\lambda = 0$, then $f(t) = 1$ and the identity is trivially satisfied. Conversely, assume for a moment that $f(t + s) = f(t)f(s)$ for all $t, s \in [0,\infty)$. For each $n \in \mathbb{N}$, define $\psi_n : [-n,\infty) \rightarrow \mathbb{R}$ by
$$
\psi_n(t) = \dfrac{f(t+n)}{f(n)}.
$$
Then Lemma \ref{202508041515} ensures that $\psi_n$ is a continuous extension of $f$ over $[-n,\infty)$. Since this property holds for every $n \in \mathbb{N}$, and $\psi_n$ is a continuous extension of $\psi_m$ whenever $n > m$, we may define $F : \mathbb{R} \rightarrow \mathbb{R}$ by
$$
F(t) = \psi_n(t), \quad \text{whenever } t \in [-n,\infty).
$$
This construction guarantees that $F$ is continuous and extends $f$ to the entire real line.

Finally, observe that for any $t, s \in \mathbb{R}$, let $n\in\mathbb{N}$ be such that $t>-n$ and $s>-n$. Then we have
\begin{multline*}F(t+s)=\psi_{2n}(t+s)=\dfrac{f(t+s+2n)}{f(2n)}\\=\left(\dfrac{f(t+n)}{f(n)}\right)\left(\dfrac{f(s+n)}{f(n)}\right)=\psi_n(t)\psi_n(s)=F(t)F(s).\end{multline*}

Since $F(0) = E_\alpha(0) = 1$, we may apply Theorem \ref{202508041539} to deduce that there exists a constant $\omega \in \mathbb{R}$ such that $F(t) = e^{\omega t}$ for every $t \in \mathbb{R}$. Then, by Proposition \ref{202508041600}, we must have
$$
\dfrac{d}{dt}\Big(E_{\alpha}(\lambda t^\alpha)\Big)=\dfrac{d}{dt}\Big(e^{\omega t}\Big)\Longrightarrow t^{\alpha-1} \lambda E_{\alpha,\alpha}(\lambda t^\alpha) = \omega e^{\omega t},
$$
for all $t > 0$, which implies
$$
\lambda = \dfrac{t^{1-\alpha} \, \omega e^{\omega t}}{ E_{\alpha,\alpha}(\lambda t^\alpha)},
$$
for all $t > 0$. Taking the limit as $t \to 0^+$, we conclude directly that $\lambda = 0$, as desired.
\end{proof}

We now present the main conclusion of this work, which we believe complete the discussion on the Mittag--Leffler function and the semigroup (or concatenation) property that it does not satisfy.

\begin{corollary}\label{202508041638}
Let $\alpha \in (0,1]$ and $\lambda \in \mathbb{R}$. Then the functional equation
\begin{equation}
\label{202508041616}
E_{\alpha}(\lambda (t+s)^{\alpha}) = E_{\alpha}(\lambda t^{\alpha})E_{\alpha}(\lambda s^{\alpha}), \quad \text{for all } t, s \in [0, \infty),
\end{equation}
holds if and only if either $\alpha = 1$ or $\lambda = 0$.
\end{corollary}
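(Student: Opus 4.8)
The plan is to read equation~\eqref{202508041616} as nothing more than the multiplicative functional equation $f(t+s) = f(t)f(s)$ applied to $f(t) = E_{\alpha}(\lambda t^{\alpha})$, so that the corollary follows by packaging Theorem~\ref{202508041619} together with the elementary case $\alpha = 1$. I would organize the argument as a case analysis on the value of $\alpha$, handling the two implications of the biconditional separately.

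For sufficiency, I would dispose of the two conditions independently. If $\lambda = 0$, then $E_{\alpha}(0) = 1$ forces both sides of~\eqref{202508041616} to equal $1$ for every $t, s$ and every $\alpha$, so the identity holds trivially. If instead $\alpha = 1$, I would use $E_{1}(z) = \sum_{n=0}^{\infty} z^{n}/\Gamma(n+1) = e^{z}$ to rewrite~\eqref{202508041616} as $e^{\lambda(t+s)} = e^{\lambda t}\,e^{\lambda s}$, which is the classical exponential law and holds for all $t, s \ge 0$.

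For necessity, I would assume~\eqref{202508041616} holds and show that $\alpha = 1$ or $\lambda = 0$. The case $\alpha = 1$ requires nothing further, so I would suppose $\alpha \in (0,1)$. Setting $f(t) = E_{\alpha}(\lambda t^{\alpha})$, the standing hypothesis becomes exactly $f(t+s) = f(t)f(s)$ for all $t, s \in [0,\infty)$, and Theorem~\ref{202508041619} then forces $\lambda = 0$. Collecting the two cases yields the desired conclusion.

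I do not expect any genuine obstacle at this stage: the analytic core — extending $f$ continuously to all of $\mathbb{R}$, invoking the exponential characterization of Theorem~\ref{202508041539}, and differentiating through Proposition~\ref{202508041600} to squeeze out $\lambda = 0$ — has already been carried out inside Theorem~\ref{202508041619}. The only point demanding attention is the logical bookkeeping of the biconditional, namely confirming that the two sufficient conditions are genuinely the only ones and that they overlap consistently; in particular the pair $\alpha = 1$, $\lambda \ne 0$ reproduces the ordinary exponential, which reassures us that the statement correctly recovers the classical limit.
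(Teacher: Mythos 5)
Your proposal is correct and follows exactly the same route as the paper: sufficiency by direct inspection of the two cases ($\lambda=0$ giving the constant function $1$, $\alpha=1$ recovering $e^{\lambda t}$ and the exponential law), and necessity by reducing the case $\alpha\in(0,1)$ to Theorem~\ref{202508041619}. No gaps.
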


\begin{proof} If we assume that $\lambda = 0$ or $\alpha = 1$, then $E_{\alpha}(\lambda t^{\alpha}) = 1$ or $E_{\alpha}(\lambda t^{\alpha}) = e^{\lambda t}$, respectively, and therefore \eqref{202508041616} is satisfied; the former case trivially, and the latter as a consequence of the properties of the exponential function. Conversely, if \eqref{202508041616} holds and $\alpha = 1$, there is nothing to prove. Now, if \eqref{202508041616} holds and $\alpha \in (0,1)$, then, thanks to Theorem \ref{202508041619}, we must have $\lambda = 0$, as desired.
\end{proof}

%\begin{remark}
%By using the properties of the two-parameter Mittag-Leffler function $E_{\alpha, \beta}(\lambda t^\alpha)$ and the three-parameter variant $E_{\alpha, \beta}^{\gamma}(\lambda t^\alpha)$, we can adapt our approach to demonstrate the failure of the semigroup property in these more general cases. Specifically, we establish that:
%
%\begin{enumerate}
%%
%\item[(i)] The identity
%%
%$$
%E_{\alpha, \beta}(\lambda (s+t)^{\alpha \beta}) = E_{\alpha, \beta}(\lambda s^{\alpha \beta}) \, E_{\alpha, \beta}(\lambda t^{\alpha \beta})
%$$
%%
%holds for all $s, t \in [0, \infty)$ if and only if $\alpha = \beta = 1$, or $\lambda = 0$, $\beta = 1$, or $\beta = 2$.\vspace*{0.2cm}
%
%\item[(ii)] The identity
%%
%$$
%E_{\alpha, \beta}^{\gamma}(\lambda (s+t)^{\alpha \beta}) = E_{\alpha, \beta}^{\gamma}(\lambda s^{\alpha \beta}) \, E_{\alpha, \beta}^{\gamma}(\lambda t^{\alpha \beta})
%$$
%%
%holds for all $s, t \in [0, \infty)$ if and only if $\alpha = \beta = \gamma = 1$, or $\lambda = 0$, $\beta = 1$, or $\beta = 2$.
%%
%\end{enumerate}
%%
%\end{remark}

\section{Concluding Remarks and Extensions}

A natural extension of Corollary~\ref{202508041638} involves matrix-valued functions of the form $E_{\alpha}(A t^\alpha)$, where $A$ is a real square matrix of order $n$. Such generalizations naturally emerge in the study of systems of fractional differential equations and are essential for understanding their structural properties and asymptotic behavior.

However, it is well known that if $A$ is diagonalizable, then $E_{\alpha}(A t^\alpha)$ can be expressed in a basis where it takes the form of a diagonal matrix whose entries are given by $E_{\alpha}(\lambda_j t^\alpha)$, where $\lambda_j$ are the eigenvalues of $A$ (cf. \cite{DaBa01}). Consequently, the result follows as a direct consequence of Corollary~\ref{202508041638}, leading to the following statement:

\begin{corollary}
Let $n \in \mathbb{N}$, $A$ be a real diagonalizable matrix of order $n$, and $\alpha \in (0,1]$. Then the functional equation
\begin{equation}
\label{202508041616}
E_{\alpha}(A (t+s)^{\alpha}) = E_{\alpha}(A t^{\alpha})E_{\alpha}(A s^{\alpha}), \quad \text{for all } t, s \in [0, \infty),
\end{equation}
holds if and only if either $\alpha = 1$ or $A = 0$.
\end{corollary}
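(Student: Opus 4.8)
The plan is to reduce the matrix identity to the scalar statement already established in Corollary~\ref{202508041638}, using that the Mittag--Leffler power series is compatible with similarity transformations. Writing $A = P D P^{-1}$ with $D = \operatorname{diag}(\lambda_1,\dots,\lambda_n)$ for the eigenvalues $\lambda_1,\dots,\lambda_n$ of $A$, the forward implication is immediate: if $A = 0$ then $E_\alpha(A t^\alpha) = I$ for all $t$, so both sides equal $I$; and if $\alpha = 1$ then $E_1(A t) = e^{A t}$, whence $e^{A(t+s)} = e^{A t}e^{A s}$ since $A t$ and $A s$ commute.

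For the converse, I would first note that applying $E_\alpha$ through its defining series commutes with conjugation, since $A^k = P D^k P^{-1}$ gives
\[
E_\alpha(A t^\alpha) = \sum_{k=0}^\infty \frac{(A t^\alpha)^k}{\Gamma(\alpha k + 1)} = P\,\Big(\sum_{k=0}^\infty \frac{(D t^\alpha)^k}{\Gamma(\alpha k + 1)}\Big)\,P^{-1} = P\,E_\alpha(D t^\alpha)\,P^{-1},
\]
with $E_\alpha(D t^\alpha) = \operatorname{diag}\big(E_\alpha(\lambda_1 t^\alpha),\dots,E_\alpha(\lambda_n t^\alpha)\big)$ because $D$ is diagonal. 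Applying the conjugation $X \mapsto P^{-1} X P$ to the assumed identity then turns it into the diagonal statement $E_\alpha(D(t+s)^\alpha) = E_\alpha(D t^\alpha)\,E_\alpha(D s^\alpha)$ for all $t,s \geq 0$. As these are diagonal matrices, this single identity is equivalent to the $n$ scalar identities $E_\alpha(\lambda_j(t+s)^\alpha) = E_\alpha(\lambda_j t^\alpha)\,E_\alpha(\lambda_j s^\alpha)$. When $\alpha \in (0,1)$, Corollary~\ref{202508041638} applies to each index and forces $\lambda_j = 0$, so $D = 0$ and therefore $A = P D P^{-1} = 0$.

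The step I expect to be the main obstacle is matching the hypotheses rather than the linear algebra: a real matrix that is diagonalizable over $\mathbb{C}$ may possess genuinely complex eigenvalues, while Corollary~\ref{202508041638} is stated for real $\lambda$. If diagonalizability is understood over $\mathbb{R}$ (a real change of basis $P$), every $\lambda_j$ is real and the argument is complete verbatim. To accommodate complex eigenvalues, I would extend the scalar criterion to $\lambda \in \mathbb{C}$ along the same lines as Theorem~\ref{202508041619}: multiplicativity together with $f(0)=1$ and continuity already forces $f(t) = E_\alpha(\lambda t^\alpha)$ to be nonvanishing on $[0,\infty)$ (if $f(t_0)=0$ then $f(t_0/2^k)=0$ for all $k$, contradicting $f(0)=1$ by continuity), which is exactly what the extension argument of Lemma~\ref{202508041515} needs in place of positivity. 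One then invokes the complex analogue of Theorem~\ref{202508041539} --- a continuous homomorphism from $(\mathbb{R},+)$ into $(\mathbb{C}^\ast,\cdot)$ taking the value $1$ at the origin is of the form $e^{c\tau}$ for some $c \in \mathbb{C}$ --- and closes as before: the derivative formula of Proposition~\ref{202508041600} (valid for complex $\lambda$ by term-by-term differentiation) yields $\lambda = t^{1-\alpha} c\,e^{ct}/E_{\alpha,\alpha}(\lambda t^\alpha)$, and letting $t \to 0^+$ with $\alpha < 1$ gives $\lambda = 0$. With each eigenvalue thus forced to vanish, $A = 0$ follows as above.
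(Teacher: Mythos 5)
Your argument is correct and follows the same route the paper takes: diagonalize $A = PDP^{-1}$, push the conjugation through the power series to reduce the matrix identity to the $n$ scalar identities $E_\alpha(\lambda_j(t+s)^\alpha)=E_\alpha(\lambda_j t^\alpha)E_\alpha(\lambda_j s^\alpha)$, and invoke the scalar Corollary~\ref{202508041638} for each eigenvalue. The paper in fact gives no formal proof at all --- it only states this one-line reduction in the surrounding text --- so your write-up is already more complete than what appears there. The one place where you genuinely go beyond the paper is the observation that a real matrix which is diagonalizable over $\mathbb{C}$ may have non-real eigenvalues, while Theorem~\ref{202508041619} and Corollary~\ref{202508041638} are stated only for $\lambda\in\mathbb{R}$; the paper silently ignores this. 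Your sketch of the complex extension is sound: multiplicativity, $f(0)=1$ and continuity force nonvanishing (via $f(t_0)=f(t_0/2)^2$), which is exactly what Lemma~\ref{202508041515} needs in place of positivity; continuous homomorphisms $(\mathbb{R},+)\to(\mathbb{C}^\ast,\cdot)$ are indeed of the form $e^{c\tau}$ (split into modulus and a circle-valued part and lift the latter); and the limit $t\to 0^+$ in $\lambda = t^{1-\alpha}\,c\,e^{ct}/E_{\alpha,\alpha}(\lambda t^\alpha)$ still kills $\lambda$ since $E_{\alpha,\alpha}(0)=1\neq 0$. So either read the hypothesis as diagonalizability over $\mathbb{R}$ and stop after your second paragraph, or include the complex-$\lambda$ extension; both are legitimate, and the latter actually patches a small gap in the paper's own reasoning.
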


A more comprehensive treatment of this property, including the case of non diagonalizable matrices as well as bounded and unbounded linear operators, is currently in preparation and will be presented in a forthcoming work.

\section*{Acknowledgements}

C\'esar E. Torres Ledesma has been supported by CONCYTEC-P, project PE501087741-2024-PROCIENCIA, titled ``An\'alisis Cualitativo de Ecuaciones No Locales y Aplicaciones: Modelos de Difusi\'on An\'omala y Problemas de Obst\'aculo''.

\end{document}